\documentclass[letterpaper, 10 pt, conference]{ieeeconf}  
\usepackage{useful_math, comment, cite}

\IEEEoverridecommandlockouts                              

\title{\LARGE \bf
Finding Super-spreaders in SIS Epidemics
}

\author{Anirudh Sridhar$^{1}$ and Arnob Ghosh$^{2}$
\thanks{$^{1}$Anirudh Sridhar is with the Department of Electrical \& Computer Engineering, New Jersey Institute of Technology
        {\tt\small anirudh.sridhar@njit.edu}}%
\thanks{$^{2}$Arnob Ghosh is with the Department of Electrical \& Computer Engineering, New Jersey Institute of Technology
        {\tt\small arnob.ghosh@njit.edu}}%
}

\begin{document}

\maketitle
\thispagestyle{empty}
\pagestyle{empty}

\begin{abstract}
In network epidemic models, controlling the spread of a disease often requires targeted interventions such as vaccinating high-risk individuals based on network structure.
However, typical approaches assume complete knowledge of the underlying contact network, which is often unavailable. While network structure can be learned from observed epidemic dynamics, existing methods require long observation windows that may delay critical interventions.

In this work, we show that full network reconstruction may not be necessary: control-relevant features, such as high-degree vertices (super-spreaders), can be learned far more efficiently than the complete structure.
Specifically, we develop an algorithm to identify such vertices from the dynamics of a Susceptible-Infected-Susceptible (SIS) process. 
We prove that in an $n$-vertex graph, vertices of degree at least $n^\alpha$ can be identified over an observation window of size $\Omega (1/\alpha)$, for any $\alpha \in (0,1)$.
In contrast, existing methods for exact network reconstruction requires an observation window that grows linearly with $n$.
Simulations demonstrate that our approach accurately identifies super-spreaders and enables effective epidemic control.
\end{abstract}

\section{Introduction}

The dynamics of viral spread in large populations depend not only on biological factors but also on the underlying \emph{contact network} through which diseases propagate \cite{newman2002spread, salathe2010resolution}. The role of the network in particular makes the design of interventions such as vaccination or immunization challenging, as the benefit of protecting an individual depends critically on who they interact with in the network.
As such, existing approaches for epidemic control typically require full knowledge of the contact network; see, e.g., 
\cite{borgs2010antidote, preciado2014optimal, nowzari2016analysis, zhang2024estimation}.
Such methods are well-suited for meta-population models, where nodes represent large aggregated groups, but are less applicable to individual-level viral spread, where fine-grained contact patterns are rarely observed.

To get around the absence of the contact network, a natural solution is to first reconstruct the network from data, and then apply interventions to the learned network. The task of reconstructing the network from data, known as \emph{structure learning}, has received considerable attention from both theoretical and practical perspectives \cite{WT04_epidemic,AA05_blog, gomez2012inferring,ACFKP13_trace_complexity,shen2014reconstructing, prasse2019infeasible, elahi2025learn}. However, such approaches tend to be computationally demanding or data-intensive, which may delay the implementation of time-sensitive interventions.

In this paper, we propose an alternative path forward. Instead of reconstructing the entire network, we estimate high-degree vertices (or super-spreaders), whose removal from a network can dramatically change epidemic dynamics \cite{pastor2002immunization, dezso2002halting}.
In \cite{mossel2024finding, brandenberger2025detecting}, this idea was recently explored in the context of Susceptible-Infected (SI) epidemics; here we focus on the Susceptible-Infected-Susceptible (SIS) process, which is more appropriate for endemic diseases and recurring outbreaks since it models re-infections \cite{contact_process, brauer2012mathematical}. 
We prove that for any $\alpha \in (0,1)$, vertices of degree at least $n^\alpha$ in a $n$-vertex network can be identified in $T = \Omega (1 / \alpha)$ time (see Theorem \ref{thm:time_horizon} for a more precise statement). 
In contrast, the best current theoretical guarantees for reconstructing sparse networks from SIS process observations require $T = \Omega(n)$ \cite{elahi2025learn}. We then investigate the empirical accuracy of our estimation procedure and its implications for control through simulations. 

\emph{Related work.} 
The structure learning problem for spreading processes has largely been studied in the context of Susceptible-Infected (SI) or Susceptible-Infected-Recovered (SIR) models \cite{WT04_epidemic, AA05_blog, gomez2012inferring}.
In such models, since nodes are only infected once before recovering, data from multiple distinct epidemics are used. In models with re-infection such as the Susceptible-Infected-Susceptible (SIS) process, it is possible to learn the underlying network from a single epidemic trajectory, though additional temporal correlations arise as a result \cite{shen2014reconstructing, prasse2019infeasible, elahi2025learn}. 
The best known theoretical results on exact structure learning for SIS epidemics requires one to observe the epidemic for $T = \Omega(n)$ time if the underlying graph has a bounded maximum degree as $n$ grows large, and in general $T = \exp ( \Omega(n))$ is needed \cite{elahi2025learn}.

Since fully learning networks precisely requires a substantial amount of data, a recent line of work \cite{mossel2024finding, brandenberger2025detecting} has explored whether one can instead learn key network features -- such as high-degree vertices --  using much less information. The aforementioned work shows that in the SI process, vertices of degree larger than $\sqrt{n}$ can be estimated with minimal data (i.e., from a constant number of epidemic trajectories), while learning vertices of degree smaller than $\sqrt{n}$ requires nearly as much data as learning the entire network. 
Interestingly, we show that this phase transition is non-existent in the SIS model: re-infections allow for any vertex of degree $n^\alpha$ (for any $\alpha \in (0,1)$) to be quickly estimated. 
Our algorithm for estimating high-degree vertices is also a significant departure from the methods of \cite{mossel2024finding}.

\emph{Notation.} 
For a set $\cA$, we let $|\cA|$ denote its cardinality. We represent a graph by $G = (V,E)$, where $V$ is the vertex set and $E$ is the edge set. We denote $n : = |V|$ and for a vertex $v \in V$, $\cN(v)$ denotes the set its neighbors.
Throughout, we use standard asymptotic notation, i.e., $O(\cdot), o(\cdot), \Omega(\cdot), \Theta(\cdot)$, and all limits are with respect to $n \to \infty$ unless otherwise specified.

\emph{Organization.}
We discuss the mathematical setup, our main results, and our algorithm in Section \ref{sec:results}. Full proofs of our main results are contained in Section \ref{sec:proofs}.
Section \ref{sec:simulations} discusses our simulation results, and we conclude in Section \ref{sec:conclusion}.

\section{Definitions and main results}
\label{sec:results}

\subsection{The SIS model}

We assume the infection process can be modeled by a stochastic Susceptible-Infected-Susceptible (SIS) process \cite{contact_process, brauer2012mathematical}, which is perhaps the simplest model for viral spread on networks with re-infections. To define the process, let $G$ be a fixed graph upon which the infection spreads, let $V$ be the vertex set of the graph, let $\beta > 0$ be the (pairwise) infection rate, and let $\gamma > 0$ be the node-level recovery rate. At any given point in time, a node $v$ is either susceptible or infected. Formally, we denote $\cS(t)$ and $\cI(t)$ to be the set of susceptible and infected nodes in the graph at time $t$. 

If a node is infected at time $t$, it recovers at rate $\gamma$. In other words, the time it takes for an infected vertex to become susceptible again is a $\Exp(\gamma)$ random variable.
If a node is susceptible at time $t$, the rate at which the node is infected is equal to the number of infected neighbors. That is, for a vertex $v \in \cS(t)$, 
\begin{equation}
\label{eq:infection_rate}
\p ( v \in \cI(t + \epsilon) \vert \cF_t) = \epsilon \beta | \cN(v) \cap \cI(t) | + o(\epsilon),
\end{equation}
where $o(\epsilon) \to 0$ faster than $\epsilon \to 0$, and $\{ \cF_t \}_{t \ge 0}$ is the natural filtration corresponding to the stochastic evolution of the cascade. 

\begin{assumption}
We assume that the initial distribution of infected and susceptible nodes in $G$ are arbitrary.
Importantly, while the parameters $\beta, \gamma$ as well as the network $G$ are unknown, we assume access to infection and recovery times for each node in some observation interval $[0,T]$ (though we emphasize that the epidemic may evolve in a manner unknown to the observer before and after this interval).
\end{assumption}

Since our goal is to identify high-degree vertices, we must precisely define what we mean by ``high-degree'' and ``low-degree''. We do so in the following assumption. 

\begin{assumption}
\label{as:graph}
Let $m, d, D$ be positive integers.
Let $G$ be a graph on $n$ vertices such that there are at most $m$ vertices of degree at least $D$, and all other vertices have degree at most $d$. 
We assume that as $n \to \infty$, $m$ is constant while $d = n^{o(1)}$, and $D = n^\alpha$ for some $\alpha \in (0,1)$.
\end{assumption}

We remark that Assumption \ref{as:graph} was also used in prior work on detecting and estimating high-degree vertices in the SI process \cite{mossel2024finding, brandenberger2025detecting}. In essence, the assumption states that high-degree nodes are relatively rare but are quite influential compared to most nodes.

\subsection{Estimating high-degree vertices}

Our first main result shows that, once a vertex has been infected sufficiently many times, it can be accurately classified as low-degree or high-degree.

\begin{theorem}
\label{thm:reinfections}
Let $T > 0$ be a time horizon and let $K$ be a positive integer. Let us additionally define $V_K(T)$ to be the set of vertices that have been infected more than $K$ times in $[0,T]$.
Then there is an algorithm (depending on $K$) which outputs a set of estimated high-degree vertices $\widehat{\mathrm{HD}}$ such that, if $K > 2/\alpha$, then
\[
\widehat{\mathrm{HD}} = \{ v  \in V_K(T) : \deg(v) \ge n^\alpha \},
\]
with probability $1 - o(1)$.
\end{theorem}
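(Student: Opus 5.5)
We design the algorithm around the following observation. When a vertex $v$ is infected at time $\tau$, the infection is transmitted along some edge from a vertex that is infected at $\tau^-$. So for every infection time $\tau$ of $v$, the set $\cI(\tau^-)$ must intersect $\cN(v)$.

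For each $v\in V_K(T)$, let $\tau_1,\dots,\tau_K$ be $K$ of its infection times in $[0,T]$, for instance the first $K$. Consider the sets of infected vertices $\cI(\tau_1^-),\dots,\cI(\tau_K^-)$ and define a ``candidate parent'' count: the algorithm looks for a vertex $u$ belonging to $\cI(\tau_i^-)$ for all $i$, or more robustly for a vertex $u$ that is infected at every $\tau_i$. The key quantity, however, is simpler. We declare $v\in\widehat{\mathrm{HD}}$ iff
\[
\Big|\bigcap_{i=1}^K \cI(\tau_i^-)\Big| \ \text{is small},
\]
or, in a dual form, iff the infected sets at its infection times share no common low-degree neighbor.

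A cleaner statistic, and the one we would actually analyze, is this. For each $u$, count the number of $i$ with $u\in\cI(\tau_i^-)$ and $u$ a plausible infector. A low-degree $v$ has at most $d=n^{o(1)}$ neighbors, so by pigeonhole some neighbor $u$ is infected at $\ge K/d$ of its infection times. A high-degree $v$, by contrast, can be infected by different neighbors each time. The threshold is chosen via the following probability calculation.

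\textbf{Step 1 (low-degree vertices are flagged).} The infections of $v$ form a point process whose intensity at time $t$ is $\beta|\cN(v)\cap\cI(t)|$. Condition on the trajectory of the other vertices. The chance that a fixed set of $K$ infection times of $v$ all coincide with a configuration in which some fixed ``non-neighbor'' pattern holds is at most $(\text{fraction})^K$. Concretely, we compare $v$'s observed infection times against the infection/recovery trajectories of candidate neighbors. For a vertex $u$ that is \emph{not} a neighbor, the event $u\in\cI(\tau_i^-)$ for all $K$ times is essentially independent of $v$'s clock. Such a coincidence happens with probability at most $p_u^K$, where $p_u$ bounds the fraction of time $u$ is infected. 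A union bound over $n$ candidates $u$ costs a factor $n$. We then need a bound of order $n\cdot n^{-\alpha K/2}=o(1)$, which is where $K>2/\alpha$ enters.

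\textbf{Step 2 (high-degree vertices are not flagged).} If $\deg(v)\ge n^\alpha$, the infector at each $\tau_i$ is drawn roughly uniformly among its currently infected neighbors. The probability that a single low-degree vertex $u$ accounts for all $K$ infections is at most $(c/n^{\alpha})^{K}$ per $u$, up to order-one factors controlling how many neighbors are infected. One more union bound over pairs $(v,u)$ needs $n^2 n^{-\alpha K}=o(1)$, which again holds exactly when $K>2/\alpha$. This strongly suggests the intended statistic: declare $v$ low-degree iff some single vertex $u$ was infected (and thus could have been the infector) at each of $v$'s $K$ infection times, and $u$ additionally is not itself identified as hub-like.

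\textbf{Main obstacle.} The hard step is making Step 2 rigorous. For a high-degree $v$ we need that, at each infection time, $v$ has many (order $n^{\alpha}$, or at least $n^{\alpha/2}$) infected neighbors, so that the chance a given $u$ is infected at all $K$ times is at most $n^{-\alpha/2}$ each time. This requires a lower bound on the infected fraction of $\cN(v)$ and a handling of temporal correlations, since $u$'s state is correlated with $v$'s reinfection. I would attempt this with a martingale / Poisson-thinning argument. The idea is to realize infection events via independent Poisson clocks on edges. Conditioned on the trajectory up to $\tau_i$, the identity of the transmitting edge is then uniform over infected neighbors, which gives the factor $1/|\cN(v)\cap\cI(\tau_i^-)|$. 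The stopping times $\tau_i$ are handled by the strong Markov property.
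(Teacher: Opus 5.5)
Your proposal has a genuine gap. It never commits to one statistic: you move between a small-intersection test, a pigeonhole count, and a common-infector test. More seriously, the probability estimates you use do not hold for any version the algorithm can actually compute. The observations are only the infection and recovery times of each vertex; $\cN(v)$ and the identity of the transmitting neighbor are unknown. So ``plausible infector'', ``non-neighbor'' and ``$u$ accounts for all $K$ infections'' are not available to the algorithm.

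The computable form of your final rule asks whether some non-hub $u$ lies in $\bigcap_{i\le K}\cI(\tau_i^-)$, and that test points the wrong way. For $\deg(v)\ge n^\alpha$, many neighbors catch the infection from $v$ during its $\Exp(\gamma)$ infectious period. They are still infected when $v$ is re-infected, only $n^{-\alpha+o(1)}$ time after it recovers. Hence a neighbor infected at $\tau_i^-$ is still infected at $\tau_{i+1}^-$ with probability of order one, not $n^{-\alpha/2}$ as your Step 2 requires. The intersection is therefore nonempty with probability bounded away from zero, and your rule labels hubs as low-degree. Your $(c/n^\alpha)^K$ bound concerns the unobservable event that $u$ is the actual infector each time, not this one.

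Step 1 has the mirror-image problem. The estimate $n\,p_u^K\le n\cdot n^{-\alpha K/2}$ presupposes $p_u\le n^{-\alpha/2}$. But the fraction of time a vertex is infected is typically of order one (e.g.\ with half the graph initially infected), so $n\,p_u^K\to\infty$. Moreover, bounding spurious coincidences does not show that a low-degree $v$ \emph{is} flagged. Nothing forces a single neighbor to be infected at all $K$ of its infection times; your pigeonhole gives only $K/d<1$. Without an $o(1/n)$ per-vertex error for low-degree vertices, the union bound over $n$ of them fails.

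The missing idea is to use \emph{when} $v$ is re-infected rather than \emph{by whom}. The paper's Algorithm \ref{alg:reinfections} thresholds $R_K(v)=\max_{k\le K}(I_{k+1}(v)-S_k(v))$ at $h=n^{-\alpha/2}$.

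If $\deg(v)\le d$, the infection hazard of $v$ never exceeds $\beta d=n^{o(1)}$. So the $K$ re-infection gaps dominate independent $\Exp(\beta d)$ variables, and $\p(R_K(v)\le h)\le(\beta d h)^K=n^{-\alpha K/2+o(1)}=o(1/n)$ exactly when $K>2/\alpha$. This is the legitimate source of the $n\cdot n^{-\alpha K/2}$ arithmetic you were reaching for.

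If $\deg(v)\ge n^\alpha$, the echo chamber that breaks your test is what makes this one work. There are $n^{\alpha-o(1)}$ infected neighbors when $v$ recovers, and each stays infected until $v$ is re-infected with probability at least $\beta/(\beta+\gamma)$. Hence every gap is $n^{-\alpha+o(1)}\ll h$. A union bound over the $K$ gaps and the $m=O(1)$ hubs finishes the proof.
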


In other words, it is possible to determine whether a vertex is high-degree or not after it has been infected a constant number of times, though smaller values of $\alpha$ necessitate a larger number of infections per node.

Our second main result provides a concrete estimate for how long the system should be observed to accurately estimate high-degree vertices.

\begin{theorem}
\label{thm:time_horizon}
Let $\delta > 0$, let $K = \lceil 3 / \alpha \rceil$, and let $\deg(v) \ge n^\alpha$.
Suppose that $T = \Omega (\alpha^{-1} \log (1 / \delta))$ and that $v$ is infected in the interval $[0,1]$. Then $\p ( v \in \widehat{\mathrm{HD}} ) \ge 1 -\delta$, where $\widehat{\mathrm{HD}}$ is the same estimator from Theorem \ref{thm:reinfections}.
\end{theorem}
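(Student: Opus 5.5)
We reduce the claim to Theorem \ref{thm:reinfections} together with a renewal-type bound on how often a high-degree vertex gets reinfected. Since $K = \lceil 3/\alpha \rceil > 2/\alpha$, Theorem \ref{thm:reinfections} says that with probability $1-o(1)$ the estimator $\widehat{\mathrm{HD}}$ coincides with $\{u \in V_K(T) : \deg(u) \ge n^\alpha\}$. For our vertex $v$ with $\deg(v) \ge n^\alpha$,
\[
\p( v \notin \widehat{\mathrm{HD}}) \le \p( v \notin V_K(T)) + o(1).
\]
It therefore suffices to show that $v$ is infected more than $K$ times in $[0,T]$ with probability at least $1-\delta/2$. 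The $o(1)$ term is absorbed into $\delta/2$ for $n$ large.

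The key structural fact is that a vertex of degree $n^\alpha$ that is infected at some time has, at a typical later moment, many infected neighbors. We therefore decompose the window $[0,T]$ into cycles. Each cycle consists of an infectious period of $v$, of length $\Exp(\gamma)$, followed by a susceptible waiting period until reinfection. The infectious periods are i.i.d.\ $\Exp(\gamma)$ and independent of everything else by the memoryless property, so they contribute a sum of $K+1$ exponentials. Its probability of exceeding $T/2$ is at most $e^{-cT}$ once $T \gtrsim K/\gamma$, by a Chernoff bound for Gamma variables.

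For the susceptible periods, we use that while $v$ is infected it infects each neighbor $u$ independently. Each $u$ that is susceptible at the start becomes infected before $v$ recovers with probability $\beta/(\beta+\gamma)$ minus lower-order corrections, and then remains infected for an independent $\Exp(\gamma)$ time. Hence after each infectious period of $v$, with probability bounded below by a constant, a positive fraction of the $n^\alpha$ neighbors is infected, up to concentration errors of order $\exp(-\Omega(n^\alpha))$. Given this, $v$ is reinfected at rate $\Theta(\beta n^\alpha)$, so its susceptible waiting time is $O(n^{-\alpha}\log n)$ with high probability.

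Combining these pieces, each cycle has length stochastically dominated by $\Exp(\gamma)$ plus a negligible term, except on a per-cycle failure event of constant probability $q<1$. On a failure, the infected neighbors may all recover. To handle this, we use the infection of $v$ in $[0,1]$ as a starting point and run independent attempts, each of constant expected duration. A geometric-trials argument then gives that the probability of fewer than $K$ successful cycles within time $T$ is at most $\exp(-c'T/K)$. Requiring this to be at most $\delta/2$ gives $T = \Omega(K\log(1/\delta)) = \Omega(\alpha^{-1}\log(1/\delta))$.

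The main obstacle is the uniform lower bound on $v$'s reinfection rate. The states of $v$'s neighbors are correlated with $v$'s past and with the adversarial initial condition, so the argument must rely only on infections transmitted by $v$ itself during its own infectious periods, via a coupling that ignores other sources. I would first try to condition on the infectious period being at least a constant $c_0$. This happens with constant probability, and on this event each neighbor is infected by $v$ with probability at least $1-e^{-\beta c_0}$, independently, even under adversarial initial states, since already-infected neighbors only help.
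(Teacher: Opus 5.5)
Your reduction to Theorem \ref{thm:reinfections} is fine, as is your Gamma/Chernoff treatment of the infectious periods, and both match the paper. The step that handles failed cycles does not hold up. You allow each cycle to fail with a constant probability $q<1$ and then assert that you can ``run independent attempts, each of constant expected duration.'' Nothing in the model gives this. After a failed cycle, $v$ is susceptible and the neighbors it infected may all have recovered. The wait until $v$'s next infection is then governed by the rest of the network, whose state is arbitrary, so that wait need not have bounded expectation and can be infinite. For example, suppose $v$ is the only infected vertex at time $0$ and recovers before transmitting to anyone. This has positive probability, and your cycle certainly fails on it. The process is then absorbed in the all-susceptible state and $v$ is never reinfected. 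The hypothesis that $v$ is infected in $[0,1]$ supplies one starting infection, not a renewal mechanism. So successive attempts are neither independent nor of bounded length, and a constant per-cycle failure probability with no control after failure cannot be pushed below an arbitrary $\delta$.

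The missing observation is that the per-cycle failure probability is actually $o(1)$, so no restarts are needed. Your constant $q$ comes only from conditioning on an infectious period of at least a fixed constant $c_0$. Instead, let $c_0=c_0(n)\to 0$ slowly; Lemma \ref{lemma:sis_neighborhood_1} uses $\tau\in[(\log n)^{-1/2},(\log n)^{1/2}]$. Then each neighbor is still infected at time $S_k(v)$ with probability $n^{-o(1)}$. This gives $n^{\alpha-o(1)}$ infected neighbors and a reinfection time of $n^{-\alpha+o(1)}$ with probability $1-o(1)$. Since $K$ is fixed, a union bound over the $K$ cycles gives $R_K(v)\le n^{-\alpha/2}$ with probability $1-o(1)$. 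This is exactly Lemma \ref{lemma:RK_high_degree}, which you were already relying on implicitly through Theorem \ref{thm:reinfections}. On that event, $I_{K+1}(v)\le 1+Kn^{-\alpha/2}+X\le 2+X$, where $X\sim\mathrm{Gamma}(K,\gamma)$ is the sum of the first $K$ infectious periods. The bound $\p(X\ge T-2)\le 2^K e^{-\gamma(T-2)/2}$ then finishes the proof with $T=O(\gamma^{-1}(K+\log(1/\delta)))$, which is the paper's argument. Separately, in your last paragraph, being infected by $v$ at some point during its infectious period is not the same as being infected when $v$ recovers. You need a bound that accounts for the neighbor's own recovery, as in your middle paragraph.
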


An important takeaway from the theorem is that high-degree vertices can be accurately estimated after observing the system for a constant amount of time (with respect to $n$). In contrast, the best known results on full network reconstruction require $T = \Omega(n)$ if the underlying graph is sparse, or $T = \exp ( \Omega(n))$ in general \cite[Theorem 3.1]{elahi2025learn}. This drastic comparison highlights the advantage of learning key features rather than the entire network.
We also remark that the observation time $T$ grows as $\alpha$ and $\delta$ decrease, which reflects weaker signals and higher accuracy requirements. Moreover, if the epidemic never reaches a high-degree vertex $v$, no information about $v$ can be learned. The assumption that $v$ is infected in $[0,1]$ ensures that sufficient activity at node $v$ occurs in the observation window.
We finally remark that while we set a concrete value of $K$ here, any choice of $K = \Theta (1/\alpha)$ larger than $2/\alpha$ proves the claim.

\begin{remark}[Implications for epidemic control]
Epidemics can be mitigated through targeted node-level interventions, such as vaccinations, which prevent nodes from transmitting a disease to its neighbors.
Targeting high-degree nodes is particularly effective \cite{borgs2010antidote, pastor2002immunization, dezso2002halting}.
If the underlying graph is unknown, one can instead target the set of estimated high-degree nodes, given by $\widehat{\mathrm{HD}}$ (see Theorem \ref{thm:reinfections} and \ref{thm:time_horizon}).
If high-degree vertices are successfully targeted through $\widehat{\mathrm{HD}}$, the epidemic may even be driven to extinction in certain cases \cite{pastor2002immunization, dezso2002halting}.
\end{remark}

\subsection{The estimation algorithm}

The key insight behind our algorithm is that high-degree vertices get re-infected at a much faster rate than low-degree vertices. For a vertex of degree $d$, the re-infection rate is at most $\beta d$, and may be even lower if infected neighbors recover.
In contrast, when a high-degree vertex becomes infected, much of its neighborhood quickly follows, and many neighbors remain infected when $v$ recovers. Consequently, $v$ is re-infected extremely quickly, creating an ``echo chamber'' that drives rapid re-infection.

Our algorithm leverages this idea by measuring the re-infection time for each vertex; that is, the amount of time it takes for a vertex to become infected again after recovering. While the nodes with the smallest re-infection times are most likely to have high degrees, there may also be false positives due to the stochastic nature of the epidemic. 
To mitigate this issue, our algorithm outputs a set of vertices which have been infected at least $K$ times and consistently have a small re-infection time. For the right choice of $K$ (i.e., $K > 2/\alpha$), the probability of false positives becomes negligible.

We now present the full details of our algorithm. For a vertex $v$ and a positive integer $k$, let $I_k(v)$ denote the $k$th time node $v$ becomes infected. Let $S_k(v)$ denote the $k$th time node $v$ becomes susceptible after infection (so that $S_1(v) \ge I_1(v)$ even if $v$ is initially susceptible). The \emph{$k$th time until re-infection} is given by $I_{k + 1}(v) - S_k(v)$.
Given a threshold $h$ and a re-infection number $K$, our algorithm (Algorithm \ref{alg:reinfections}) outputs the set of vertices for which the first $K$ re-infection times are at most $h$.

\begin{breakablealgorithm}
\caption{Estimating high-degree vertices}
\label{alg:reinfections}
\begin{algorithmic}[1]
\Require{Sequence of infection and recovery events per node in the interval $[0,T]$, threshold $h >0$, re-infection number $K$.}
\Ensure{A set $\widehat{\mathrm{HD}} \subset V$.}
\State For each $v \in V$ that is infected more than $K$ times in $[0,T]$ (i.e., $I_{K+1}(v) \le T$), compute
\begin{align}
\label{eq:reinfection_time}
R_K(v) & : = \max_{1 \le k \le K} ( I_{k + 1}(v) - S_k(v)).
\end{align}
\State Return $\widehat{\mathrm{HD}} : = \{v : R_K(v) \le h \}$.
\end{algorithmic}
\end{breakablealgorithm}

In Section \ref{sec:proofs}, we analyze the performance of Algorithm \ref{alg:reinfections} and show that setting $K = \lceil 3 / \alpha \rceil$ and $h = n^{- \alpha/2}$ (assuming $\alpha$ is indeed known) yields an estimate with the properties discussed in Theorems \ref{thm:reinfections} and \ref{thm:time_horizon}.

\section{Analysis of re-infection times: Proofs of Theorems \ref{thm:reinfections} and \ref{thm:time_horizon}}
\label{sec:proofs}

\subsection{Re-infection time of a high-degree vertex}

The main technical result behind the proofs of our theorems bounds the re-infection time of a high-degree vertex.

\begin{lemma}
\label{lemma:reinfection_high_degree}
If $\deg(v) \ge n^\alpha$, then $I_2(v) - S_1(v) \le n^{- \alpha + o(1)}$ with probability $1 - o(1)$, where $o(1) \to 0$ as $n \to \infty$.
\end{lemma}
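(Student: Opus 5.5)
Our plan is to analyze the dynamics on the time window $[I_1(v), I_2(v)]$ using a coupling with independent exponential clocks. Let $N := \cN(v)$ with $|N| \ge n^\alpha$, and let $\tau := S_1(v) - I_1(v) \sim \Exp(\gamma)$ be the first infectious period of $v$. While $v$ is infected, each neighbor $u \in N$ that is susceptible receives infection from $v$ at rate $\beta$, independently of everything else. We will consider only these $v \to u$ transmissions and ignore all other infections. This is legitimate because the SIS process is monotone, so additional infections can only increase the number of infected neighbors.

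\textbf{Step 1: $v$ stays infected long enough.} Fix a small $\epsilon > 0$. With probability $1 - O(n^{-\epsilon})$ we have $\tau \ge n^{-\epsilon}$. On this event, and using only $v$'s transmissions, each neighbor $u$ is in one of two situations:
\begin{itemize}
\item $u$ is already infected at time $I_1(v)$, or
\item $u$ is susceptible at time $I_1(v)$, and is then infected during $[I_1(v), I_1(v)+n^{-\epsilon}]$ with probability at least $1 - e^{-\beta n^{-\epsilon}} \approx \beta n^{-\epsilon}$.
\end{itemize}
In either case, for $u$ to still be infected at time $S_1(v)$, it must avoid recovering before $S_1(v)$.

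It is cleaner to examine only the last stretch $[S_1(v) - \eta, S_1(v)]$ with $\eta = n^{-\epsilon}$. Each neighbor $u$ is infected at time $S_1(v)$ with probability at least $p := c\,\eta$ for a constant $c = c(\beta,\gamma) > 0$. To see this, note that whatever $u$'s state is at time $S_1(v) - \eta$, the following two-step route has probability at least $c\eta$: either $u$ is infected and does not recover during the window, or $u$ is susceptible, gets infected by $v$ within the window, and then does not recover before $S_1(v)$.

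There is a subtlety here, because conditioning on $S_1(v)$ is conditioning on a stopping time of $v$'s own recovery clock. This is handled by the memorylessness of exponentials: $v$'s recovery clock is independent of the infection clocks on the edges $v\to u$ and of $u$'s recovery clocks. We therefore condition on $\tau$ and treat $v$ as a fixed infected source on a known interval.

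\textbf{Step 2: concentration.} Conditioned on $\tau$ and on the states at time $S_1(v)-\eta$, the events "$u$ is infected at $S_1(v)$" are lower bounded by independent Bernoulli$(c\eta)$ indicators. This holds because each such event is determined by the independent clocks on edge $(v,u)$ and at node $u$, once we drop the other infection sources via the monotone coupling.

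Applying Chernoff, with probability $1-o(1)$ the number of infected neighbors at time $S_1(v)$ is
\[
M \ge \tfrac12 c\, n^{\alpha-\epsilon}.
\]

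\textbf{Step 3: re-infection.} After $S_1(v)$, each of these $M$ neighbors independently remains infected for an $\Exp(\gamma)$ time. While it remains infected, it transmits to $v$ at rate $\beta$. Each neighbor therefore infects $v$ before recovering at an independent exponential time of rate $\beta$, censored at its own recovery. For any $s>0$, it follows that
\[
\p(I_2(v) - S_1(v) > s) \le \Big(1 - \tfrac{\beta}{\beta+\gamma}(1-e^{-(\beta+\gamma)s})\Big)^{M} \le \exp(-c' M s).
\]
Choosing $s = n^{-\alpha+2\epsilon}$ gives $c' M s \gtrsim n^{\epsilon}\to\infty$. Since $\epsilon$ is arbitrary, we obtain $I_2(v)-S_1(v)\le n^{-\alpha+o(1)}$ w.p. $1-o(1)$.

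If the observation window ends early or the interval starts mid-infection, this does not affect the statement. The lemma is about the random times themselves, and our lower bounds use only the clocks after $I_1(v)$.

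\textbf{Main obstacle.} We expect the main obstacle to be Step 1–2: rigorously justifying the monotone coupling and the independence needed to lower bound the number of infected neighbors at the random time $S_1(v)$. We would first try a graphical (Harris) construction. In that construction, all transmission and recovery events are independent Poisson processes, and the monotone lower-bound process uses only the arrows $v\to u$ and $u\to v$ together with node recoveries. In the lower-bound process the neighbors evolve independently given $v$'s trajectory, which is exactly what is needed for the Chernoff step.
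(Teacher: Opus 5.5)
Your proof is correct and follows the paper's two-stage structure. First, a binomial lower bound plus concentration shows that $n^{\alpha - o(1)}$ neighbors of $v$ are infected at $S_1(v)$ (the paper's Lemma~\ref{lemma:sis_neighborhood_1}); then each of them independently races an $\Exp(\beta)$ transmission against an $\Exp(\gamma)$ recovery. The differences are only technical: you use the last window of length $n^{-\epsilon}$ before $S_1(v)$ instead of restricting $\tau$ to $[(\log n)^{-1/2}, (\log n)^{1/2}]$, and you bound the re-infection tail directly as a product over independent races rather than through the paper's count $Z$ plus Markov's inequality, which is a cleaner and stronger estimate.
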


To prove the lemma, we start with the following supporting result, which lower bounds the number of infected neighbors in a high-degree vertex's neighborhood when it recovers.

\begin{lemma}
\label{lemma:sis_neighborhood_1}
If $\deg(v) \ge n^\alpha$, then $| \cN(v) \cap \cI( S_1(v)) | \ge n^{\alpha - o(1)}$ with probability $1 - o(1)$.
\end{lemma}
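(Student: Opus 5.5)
Fix a vertex $v$ with $\deg(v) \ge n^\alpha$. Let $I_1(v)$ be the time it first becomes infected, so it stays infected until $S_1(v) = I_1(v) + \tau$ with $\tau \sim \Exp(\gamma)$. If $v$ is infected at time $0$, the remaining infectious time is still $\Exp(\gamma)$ by memorylessness. The plan is to show that while $v$ is infected, each neighbor $u \in \cN(v)$ is, roughly independently, infected at the moment $v$ recovers with probability bounded below by a constant. A concentration argument then gives $n^{\alpha - o(1)}$ infected neighbors at $S_1(v)$.

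\textbf{Step 1: the infectious period is not too short.} Since $\tau \sim \Exp(\gamma)$, we have $\p(\tau \ge \epsilon_n) = e^{-\gamma \epsilon_n} \to 1$ for any $\epsilon_n \to 0$. For flexibility we condition only on $\tau \ge c$ for a constant $c$; this holds with probability $e^{-\gamma c}$, which tends to $1$ as $c \to 0$. To get probability $1 - o(1)$, let $c = c_n \to 0$ slowly, say $c_n = 1/\log n$. Then the per-neighbor probability below becomes roughly $\beta c_n$. This is still $n^{-o(1)}$, and the target $n^{\alpha - o(1)}$ absorbs it.

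\textbf{Step 2: a per-neighbor lower bound.} Use the graphical (Poisson clock) construction. Each directed edge $(v,u)$ carries an independent rate-$\beta$ Poisson process of transmission attempts, and each vertex $u$ carries an independent rate-$\gamma$ recovery process. Neighbor $u$ is infected at time $S_1(v)$ if the following event $A_u$ occurs: some transmission clock on $(v,u)$ rings in $[S_1(v) - c_n, S_1(v)]$, and $u$'s recovery clock does not ring between that ring and $S_1(v)$. Note that $u$ becomes infected at that ring whether or not it was already infected. Working backward from $S_1(v)$, the window $[S_1(v) - c_n, S_1(v)]$ lies inside $v$'s infectious period on $\{\tau \ge c_n\}$. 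Then $\p(A_u \mid \tau \ge c_n) \ge (1 - e^{-\beta c_n}) e^{-\gamma c_n} \ge \beta c_n / 2$ for large $n$.

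The events $A_u$ for distinct $u$ depend only on the clocks on edges $(v,u)$ and on $u$'s own recovery clock. These are independent across $u$. They are also independent of $v$'s recovery clock, which alone determines $\tau$ and the window, given $v$'s infection time. The main obstacle is making this independence rigorous when $I_1(v)$ itself is random and depends on the neighbors. To handle it, condition on $\cF_{I_1(v)}$ (a stopping time) and use the strong Markov property. After $I_1(v)$, the clocks on $(v,u)$, the recovery clocks of the $u$, and $v$'s recovery clock are fresh and independent. Because $A_u$ is defined by a sufficient condition that ignores other infection paths, no correlations with the rest of the graph enter.

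\textbf{Step 3: concentration.} Conditionally on $\tau \ge c_n$ and $\cF_{I_1(v)}$, the count $\sum_{u \in \cN(v)} \mathbf 1_{A_u}$ stochastically dominates $\mathrm{Bin}(n^\alpha, \beta c_n/2)$. Its mean is $n^\alpha \beta/(2\log n) = n^{\alpha - o(1)}$. By a Chernoff bound, the count is at least half its mean with probability $1 - \exp(-n^{\alpha - o(1)})$. Combining with $\p(\tau \ge c_n) \to 1$ yields $|\cN(v) \cap \cI(S_1(v))| \ge n^{\alpha - o(1)}$ with probability $1 - o(1)$.
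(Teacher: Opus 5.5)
Your proof is correct. Its skeleton matches the paper's: restrict to a typical value of $\tau = S_1(v) - I_1(v)$, show each neighbor is infected at time $S_1(v)$ with probability $n^{-o(1)}$ independently across neighbors, and conclude by binomial concentration. The difference is the per-neighbor event.

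The paper looks forward from $I_1(v)$ over the whole infectious period. It splits $\cN(v)$ into two groups:
\begin{itemize}
\item $\cA$, the neighbors susceptible at $I_1(v)$, which must be infected before $v$ recovers and then survive until $S_1(v)$, with probability at least $(1-e^{-\beta\tau})e^{-\gamma\tau}$;
\item $\cB$, the neighbors already infected, which must survive the whole period, with probability $e^{-\gamma\tau}$.
\end{itemize}
Because $e^{-\gamma\tau}$ degrades for long periods, the paper needs the two-sided condition $(\log n)^{-1/2} \le \tau \le (\log n)^{1/2}$.

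You instead look backward from $S_1(v)$ over a window of length $c_n$, using only the transmission clocks on the edges $(v,u)$. This buys three things:
\begin{itemize}
\item a neighbor's state at $I_1(v)$ becomes irrelevant, so there is no $\cA$/$\cB$ split;
\item the per-neighbor bound does not depend on $\tau$, so only $\tau \ge c_n$ is needed;
\item the Poisson-clock construction, with the strong Markov property at $I_1(v)$, actually justifies the conditional independence behind the binomial domination.
\end{itemize}
The paper asserts that domination directly. But its event for $u \in \cA$ (``$u$ is infected before $v$ recovers'', where $u$ ``may have more infected neighbors'') allows infection from vertices other than $v$, which can correlate neighbors that share common neighbors. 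Making it rigorous requires the same restriction to $v$'s own clocks that you use.

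The paper's version is shorter and avoids setting up the graphical representation. Yours is the more careful argument and is robust to long infectious periods.
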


\begin{proof}
Denote $\cA : = \cN(v) \cap \cS ( I_1(v))$ and $\cB : = \cN(v) \cap \cI(I_1(v))$ to be the set of susceptible and infected nodes in the neighborhood of $v$, respectively, when $v$ is first infected. As a shorthand, we will also denote $\tau : = S_1(v) - I_1(v)$, which is a $\mathrm{Exp}(\gamma)$ random variable.

Note that a vertex $u \in \cA$ is infected at time $S_1(v)$ if (1) $u$ is infected before $v$ recovers and (2) the recovery time of $u$ exceeds $\tau$. Conditioned on $\tau$, the probability of the first scenario is at least $1 - e^{- \beta \tau}$, since $u$ is adjacent to $v$ but may have more infected neighbors. The conditional probability of the second scenario is equal to $e^{- \gamma \tau}$. Consequently, conditioned on $\tau$, the number of vertices in $\cA$ that are infected at time $S_1(v)$ is stochastically lower bounded by $X \sim \mathrm{Bin}( |\cA|, ( 1 - e^{- \beta \tau}) e^{- \gamma \tau})$.

On the other hand, a vertex $w \in \cB$ is infected at time $S_1(v)$ if its recovery time is at least $\tau$, which happens with probability $e^{- \gamma \tau}$ conditioned on $\tau$. Consequently, conditioned on $\tau$, the number of vertices in $\cB$ that are infected at time $S_1(v)$ is stochastically lower bounded by $Y \sim \mathrm{Bin} ( |\cB|, e^{- \gamma \tau})$.

Since $X$ and $Y$ are conditionally independent, $X + Y$ stochastically bounds a  $\mathrm{Bin}(\deg(v), (1 - e^{- \beta \tau}) e^{-\gamma \tau})$ random variable. 
If $( \log n)^{-1/2} \le \tau \le (\log n)^{1/2}$, then  $(1 - e^{-\beta \tau}) e^{-\gamma \tau} = n^{-o(1)}$. Consequently, Hoeffding's inequality implies that $X + Y \ge n^{\alpha - o(1)}$ with probability $1 - o(1)$. The claimed result follows since $\tau$ falls in the aforementioned range with probability $1 - o(1)$ and since $X + Y$ is stochastically bounded by $| \cN(v) \cap \cI( S_1(v)) |$. \qed
\end{proof}

We are now ready to prove Lemma \ref{lemma:reinfection_high_degree}.

\begin{proof}[Proof of Lemma \ref{lemma:reinfection_high_degree}]
Condition on $\cF_{S_1(v)}$ (which is valid since $S_1(v)$ is an almost surely finite stopping time) and assume that $| \cN(v) \cap \cI( S_1(v)) | \ge n^{\alpha - o(1)}$.
The time it takes for a given $u \in \cN(v) \cap \cI(S_1(v))$ to recover and to infect $v$ are independent $\Exp(\gamma)$ and $\Exp(\beta)$ random variables. The probability that $u$ is infected from time $S_1(v)$ to $I_2(v)$ (that is, until $v$ is re-infected) is at least the probability that $u$ infects $v$ before recovering, which is $\beta / (\beta + \gamma)$. 
Letting $Z$ be the number of vertices in $\cN(v)$ that stay infected from time $S_1(v)$ to $I_2(v)$, we can stochastically lower bound $Z$ by a $\Bin(n^{\alpha - o(1)}, \beta / (\beta + \gamma))$ random variable (this follows from the independence of recovery and pairwise infection times). By Hoeffding's inequality, it follows that for any $\epsilon > 0$, $Z \ge  n^{\alpha -\epsilon}$ with probability at least $1 - o(1)$.

Next, note that on the event where $Z \ge n^{\alpha - \epsilon}$, the re-infection time of $v$ is stochastically bounded by a $\Exp( n^{\alpha - \epsilon})$ random variable. 
By Markov's inequality, this random variable is at most $n^{-\alpha + 2 \epsilon}$ with probability $1 - o(1)$.

Together, our arguments show that, conditioned on $\cF_{S_1(v)}$ and assuming $| \cN(v) \cap \cI(S_1(v)) | \ge n^{\alpha - o(1)}$, we have $I_2(v) - S_1(v) \le n^{-\alpha + 2 \epsilon}$ with probability $1 - o(1)$.
Let us note that since $| \cN(v) \cap \cI(S_1(v)) | \ge n^{\alpha - o(1)}$ with probability $1 - o(1)$ by Lemma \ref{lemma:sis_neighborhood_1}, we may remove the conditioning on $\cF_{S_1(v)}$ to show \emph{unconditionally} that $I_2(v) - S_1(v) \ge n^{-\alpha + 2 \epsilon}$ with probability $o(1)$.
As this holds for any $\epsilon > 0$, we may take $\epsilon \to 0$ at an arbitrarily slow rate to prove the claimed result. \qed
\end{proof}

\subsection{Multiple re-infections: Proof of Theorem \ref{thm:reinfections}}

Recall that for a vertex $v$ and a positive integer $K$, $R_K(v) = \max_{1 \le k \le K} ( I_{k + 1}(v) - S_k(v))$ is the maximum of the first $K$ re-infection times of $v$. 
We also recall that $V_K(T)$ is the set of vertices that have experienced more than $K$ infections in $[0,T]$.
We prove Theorem \ref{thm:reinfections} by showing that $R_K(v)$ exhibits different behaviors for high-degree and low-degree $v$, provided $K$ is large enough.

\begin{lemma}
\label{lemma:RK_high_degree}
Let $K$ be a fixed positive integer. If $\deg(v) \ge n^\alpha$, then $R_K(v) \le n^{- \alpha + o(1)}$ with probability $1 - o(1)$, where $o(1) \to 0$ as $n \to \infty$.
\end{lemma}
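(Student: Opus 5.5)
The plan is to reduce Lemma~\ref{lemma:RK_high_degree} to $K$ applications of Lemma~\ref{lemma:reinfection_high_degree}, one per re-infection, followed by a union bound over the $K$ indices. Since $K$ is a fixed constant, it suffices to show that for each $1 \le k \le K$,
\[
\p\bigl( I_{k+1}(v) - S_k(v) > n^{-\alpha + o(1)} \bigr) = o(1).
\]
The union bound then gives $\p(R_K(v) > n^{-\alpha+o(1)}) \le K \cdot o(1) = o(1)$. Here the $o(1)$ in the exponent may be taken as the maximum of the $K$ individual $o(1)$ terms, which is still $o(1)$.

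For a fixed $k$, we would rerun the proofs of Lemma~\ref{lemma:sis_neighborhood_1} and Lemma~\ref{lemma:reinfection_high_degree} with $(I_1(v), S_1(v), I_2(v))$ replaced by $(I_k(v), S_k(v), I_{k+1}(v))$. The key observation is that those proofs never used anything special about the \emph{first} infection. They only condition on the state at the infection time $I_1(v)$, which is a stopping time. In that state the neighbors split into $\cA$ and $\cB$, with $|\cA| + |\cB| = \deg(v)$, and the initial configuration is otherwise arbitrary. Since $I_k(v)$ is also an almost surely finite stopping time (on the event that it occurs), the strong Markov property lets us condition on $\cF_{I_k(v)}$. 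We would then repeat the argument verbatim, using the following facts:
\begin{itemize}
\item $\tau_k := S_k(v) - I_k(v) \sim \Exp(\gamma)$, independent of $\cF_{I_k(v)}$.
\item Each $u \in \cA$ is infected and still infected at $S_k(v)$ with conditional probability at least $(1-e^{-\beta\tau_k})e^{-\gamma\tau_k}$.
\item Each $w \in \cB$ is still infected with probability $e^{-\gamma \tau_k}$.
\end{itemize}
This gives $|\cN(v) \cap \cI(S_k(v))| \ge n^{\alpha - o(1)}$ with probability $1-o(1)$. Conditioning next on $\cF_{S_k(v)}$, the same binomial/exponential race from the proof of Lemma~\ref{lemma:reinfection_high_degree} yields $I_{k+1}(v) - S_k(v) \le n^{-\alpha+o(1)}$ with probability $1-o(1)$. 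Because each bound holds uniformly over the conditioning $\sigma$-algebra, integrating out the conditioning gives the unconditional statement.

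The main obstacle is making sure the bounds are genuinely uniform in the conditioning. In particular, the estimates from Lemma~\ref{lemma:sis_neighborhood_1} must not depend on the history before $I_k(v)$, for instance through correlations built up during earlier re-infections. I would handle this by noting two things. First, the lower bounds hold for \emph{any} partition of $\cN(v)$ into $\cA \cup \cB$. Second, the recovery clocks and edge-infection clocks after a stopping time are fresh exponentials by memorylessness. Hence the stochastic domination by $\Bin(\deg(v), (1-e^{-\beta\tau})e^{-\gamma\tau})$ holds conditionally on $\cF_{I_k(v)}$ with no dependence on the past.

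A minor point is the event that $v$ is never infected a $k$th time. Since $R_K(v)$ is only defined when $I_{K+1}(v)$ exists, we would interpret the statement on that event. Alternatively, one can use the observation within the argument that each re-infection occurs in finite time with probability $1-o(1)$, which the same bound already shows.
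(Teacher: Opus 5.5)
Your proposal is correct and follows essentially the same route as the paper. The paper also takes a union bound over the $K$ re-infection indices, conditions on $\cF_{I_k(v)}$ via the tower rule, and treats the $k$th re-infection time as the first re-infection time of the process restarted at $I_k(v)$, so that Lemma~\ref{lemma:reinfection_high_degree} applies to each term. Your remarks on uniformity over the conditioning and on the $o(1)$ exponent only make explicit what the paper leaves implicit; for the exponent, the paper fixes $\theta<\alpha$ and then lets $\theta\to\alpha$.
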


\begin{proof}
For $\theta < \alpha$, we can bound
\begin{align*}
& \p \left ( R_K(v) > n^{-\theta} \right)  \stackrel{(a)}{\le} \sum_{k = 1}^K \p \left ( I_{k + 1}(v) - S_k(v) > n^{-\theta} \right) \\
& \hspace{0.5cm} \stackrel{(b)}{=} \p \left ( I_2(v) - S_1(v) > n^{-\theta} \right ) \\
& \hspace{1cm} + \sum_{k = 2}^K \E \left[ \p \left( I_{k + 1}(v) - S_k(v) > n^{-\theta} \vert \cF_{I_k(v)} \right) \right],
\end{align*}
where the $(a)$ is due to a union bound, and the $(b)$ uses the tower rule for conditional expectations. By Lemma \ref{lemma:reinfection_high_degree}, the probability corresponding to $I_2(v) - S_1(v)$ is $o(1)$. For each term in the sum, we note that the conditional probabilities are each $o(1)$ by Lemma \ref{lemma:reinfection_high_degree} as well, since the $k$th re-infection time $I_{k + 1}(v) - S_k(v)$ can be interpreted as the first re-infection time, if we start the SIS process at time $I_k(v)$. Consequently, since $K$ is assumed to be constant with respect to $n$, $R_K(v) > n^{- \theta}$ with probability $o(1)$. As this holds for all $\theta < \alpha$, the claimed result follows. \qed
\end{proof}

\begin{lemma}
\label{lemma:RK_low_degree}
Let $K > 2/\alpha$ be an integer. If $\deg(v) \le d$ (with $d$ defined in Assumption \ref{as:graph}), then $R_K(v) \le n^{- \alpha/2}$ with probability $o ( 1/n)$.
\end{lemma}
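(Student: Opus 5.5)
The event $\{R_K(v) \le n^{-\alpha/2}\}$ requires each of the first $K$ re-infection times $I_{k+1}(v) - S_k(v)$, $k = 1,\dots,K$, to be at most $h := n^{-\alpha/2}$. The plan is to show that, conditionally on the past, each of these events has probability at most $\beta d h = n^{-\alpha/2 + o(1)}$, and then to multiply the $K$ conditional bounds using the tower rule.

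\emph{Single re-infection bound.} Fix $k$ and condition on $\cF_{S_k(v)}$, which is valid since $S_k(v)$ is a stopping time. On the interval $[S_k(v), I_{k+1}(v))$, vertex $v$ is susceptible. By \eqref{eq:infection_rate}, its infection intensity at any time $t$ in this interval is $\beta |\cN(v) \cap \cI(t)| \le \beta \deg(v) \le \beta d$, regardless of how the neighbors evolve.

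A standard thinning/coupling argument then applies. Generate the potential infection events of $v$ from a Poisson clock of rate $\beta d$, and accept each ring with probability $|\cN(v)\cap\cI(t)|/d$. This shows that $I_{k+1}(v) - S_k(v)$ stochastically dominates an $\Exp(\beta d)$ random variable, conditionally on $\cF_{S_k(v)}$. Hence
\[
\p\big( I_{k+1}(v) - S_k(v) \le h \,\big\vert\, \cF_{S_k(v)} \big) \le 1 - e^{-\beta d h} \le \beta d h .
\]
On the event $\{I_{k+1}(v) = \infty\}$ the re-infection time is infinite, so the bound holds trivially there. One subtlety is that the intensity bound must hold pathwise, not merely in expectation. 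The clean way to handle this is to compare the compensator of the infection counting process of $v$, which is at most $\beta d$ times elapsed time, so that $\p(\text{jump in } [S_k, S_k+h] \mid \cF_{S_k}) \le \E[\text{compensator increment}] \le \beta d h$. This compensator comparison is the main technical point I would write carefully.

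\emph{Iterating and counting.} Write $E_k := \{ I_{k+1}(v) - S_k(v) \le h \}$. The events $E_1,\dots,E_{k-1}$ are $\cF_{S_k(v)}$-measurable, since $I_k(v) \le S_k(v)$. Applying the tower rule successively, as in the proof of Lemma \ref{lemma:RK_high_degree}, gives
\[
\p\Big(\bigcap_{k=1}^K E_k\Big) = \E\Big[\mathbf 1_{E_1\cap\cdots\cap E_{K-1}}\, \p(E_K \mid \cF_{S_K(v)})\Big] \le \cdots \le (\beta d h)^K .
\]
By Assumption \ref{as:graph}, $d = n^{o(1)}$, and $\beta$ is constant. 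Therefore
\[
(\beta d h)^K = n^{-K\alpha/2 + o(1)} .
\]
Since $K > 2/\alpha$ is a fixed integer, $K\alpha/2 > 1$. Choosing the $o(1)$ term smaller than $(K\alpha/2 - 1)/2$ for large $n$ makes the bound $n^{-1-c}$ for some constant $c>0$, which is $o(1/n)$.

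Finally, $R_K(v)$ is only defined on $\{I_{K+1}(v) \le T\}$. On that event, $R_K(v) \le h$ is exactly $\bigcap_k E_k$, so restricting to it only decreases the probability and the bound carries over. The expected obstacle is the rigorous justification of the conditional domination in the second step, since the neighbors' states depend on the whole history. The compensator (or Poisson thinning) argument resolves this because the bound $\beta d$ on the intensity is deterministic.
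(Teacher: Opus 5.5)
Your proposal is correct and follows the paper's route. Like the paper, you dominate each re-infection time by an $\mathrm{Exp}(\beta d)$ variable, multiply the $K$ bounds, and finish with $1-e^{-x}\le x$, $d=n^{o(1)}$ and $K\alpha/2>1$. The only difference is rigor: the paper just asserts that the dominating exponentials are independent, while your sequential conditioning on $\mathcal{F}_{S_k(v)}$ via the tower rule (with the compensator or thinning bound) properly justifies that product bound.
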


\begin{proof}
If $\deg(v) \le d$, then each re-infection time $I_{k + 1}(v) - S_k(v)$ stochastically upper bounds a $\Exp(  d \beta)$ random variable; moreover, these random variables are independent. Consequently, 
\begin{multline*}
\p \left( \max_{1 \le k \le K} ( I_{k + 1}(v) - S_k(v) ) \le n^{- \alpha / 2} \right) \\
\le \left ( 1 - \exp ( d \beta n^{- \alpha / 2} ) \right)^K \stackrel{(a)}{\le} ( d \beta n^{- \alpha / 2} )^K \stackrel{(b)}{=} o \left( \frac{1}{n} \right),
\end{multline*}
where $(a)$ uses the inequality $1 - e^{-x} \le x$, and $(b)$ holds since $d \beta = n^{o(1)}$ and $K > 2 / \alpha$. \qed
\end{proof}

Together, the two lemmas readily prove Theorem \ref{thm:reinfections}.

\begin{proof}[Proof of Theorem \ref{thm:reinfections}]
We first note that since the number of high-degree vertices is constant with respect to $n$ (see Assumption \ref{as:graph}), Lemma \ref{lemma:RK_high_degree} implies that with probability $1 - o(1)$, 
\begin{equation}
\label{eq:RK_high_degree}
\min_{v \in V_K(T) : \deg(v) \ge n^\alpha} R_K(v) \le n^{- \alpha + o(1)}.
\end{equation}
On the other hand, Lemma \ref{lemma:RK_low_degree} and a union bound over at most $n$ low-degree vertices implies that with probability $1 - o(1)$, 
\begin{equation}
\label{eq:RK_low_degree}
\min_{v \in V_K(T) : \deg(v) \le d} R_K(v) > n^{- \alpha /2},
\end{equation}
Together, \eqref{eq:RK_high_degree} and \eqref{eq:RK_low_degree} imply that if the threshold is set to be $h = n^{- \alpha/ 2}$, Algorithm \ref{alg:reinfections} indeed returns $\widehat{\mathrm{HD}} = \{ v \in V_K(T) : \deg(v) \ge n^\alpha \}$. \qed
\end{proof}

\subsection{Inference on short-term time horizons: Proof of Theorem \ref{thm:time_horizon}}

Suppose that $v$ is high degree and $I_1(v) \in [0,1]$. By Theorem \ref{thm:reinfections}, it suffices to show that $v \in V_K(T)$, or equivalently, $I_{K + 1}(v) \le T$, with probability at least $1 - \delta$. As a shorthand, let us define $X : = \sum_{k = 1}^K (S_k(v) - I_k(v))$ as well as the event $\cE : = \{ R_K(v) \le n^{- \alpha / 2 } \}$.
We remark that $X \sim \mathrm{Gamma}(K, \gamma)$ since each $S_k(v) - I_k(v)$ is an independent $\Exp(\gamma)$ random variable. Additionally, $\p ( \cE) = 1 - o(1)$ in light of Lemma \ref{lemma:RK_high_degree}.

Next, note that we have the decomposition
$I_{K + 1}(v) = I_1(v) + \sum_{k = 1}^K (I_{k + 1}(v)- S_k(v)) + X.$
On the event $\cE$,
\begin{equation}
\label{eq:IK_inequality}
| I_{K + 1}(v) - X | \le I_1(v) + K n^{- \alpha / 2} \le 2,
\end{equation}
where the first inequality holds on the event $\cE$, and the second inequality uses $I_1(v) \le 1$ and $K n^{- \alpha / 2} \le 1$ for $n$ sufficiently large.

Putting everything together, we have
\begin{multline*}
\p ( \{ I_{K + 1}(v) \ge T \} \cap \cE ) \stackrel{(a)}{\le} \p ( X \ge T -2 ) \\ \stackrel{(b)}{\le} \exp \left \{ K \log 2 - \frac{\gamma}{2} ( T - 2) \right \}  \stackrel{(c)}{\le} \delta,
\end{multline*}
where $(a)$ follows from \eqref{eq:IK_inequality}, $(b)$ is due to a Chernoff bound, and $(c)$ holds provided $T \ge c K \gamma^{-1} \log (1 / \delta)$, where $c > 0$ is a universal constant. Finally, since $\p ( \cE ) = 1 - o(1)$, it holds that $\p ( I_{K + 1}(v) \le T) \le \delta + o(1) \le 2 \delta$. 

We conclude by choosing a value of $K$. Theorem \ref{thm:reinfections} requires us to take $K > 2 / \alpha$, so we choose $K = \lceil 3 / \alpha \rceil$, which also leads to $T = \Omega (\alpha^{-1} \log (1 / \delta))$. \qed

\section{Simulations}
\label{sec:simulations}

\subsection{Network model and algorithm}

We simulate a stochastic SIS process on a graph $G$ with 1000 low-degree vertices and 10 additional high-degree vertices. Specifically, we first generate a random 4-regular graph $G'$ on 1000 vertices, and then add 10 vertices of degree $D$ whose neighbors are chosen uniformly at random from $G'$. The resulting graph is $G$. 
We choose this graph as a convenient case study for baseline analysis of our algorithm; we defer a study of more realistic networks to future work.
Infection and recovery rates are $\beta = 1.0$ and $\gamma = 0.5$; notably, the epidemic still persists in this case even when high-degree vertices are removed. The initial conditions are chosen so that 50\% of the network is infected at random. Additionally, all high-degree vertices are initially infected so that their impact on estimation and control can be clearly measured.
To estimate high-degree vertices, we use a modified version of Algorithm \ref{alg:reinfections}: for a given $K$, we compute $R_K(v)$ (see \eqref{eq:reinfection_time}) for all vertices $v$ and return the 10 vertices with the smallest values. This ranking-based approach is perhaps more natural when the number of high-degree vertices is known or there is a strict intervention budget.

\subsection{Estimation accuracy}

We first evaluate the accuracy of our algorithm as a function of $D$. For $D \in \{25, 50, 75, 100 \}$, we simulate the SIS process on $G$ for time horizons up to $T = 20$. We then set $K = 1$ for our algorithm, and computed the accuracy of its output, where accuracy is defined to be the fraction of true high-degree nodes identified. Our results are averaged over 50 trials, and displayed in Figure \ref{fig:accuracy}.
For all values of $D$, the accuracy-time curves exhibit similar characteristics. There is a brief initial increase (likely due to early rapid re-infections), followed by a short dip (likely from false positives as more re-infections occur throughout the network), and then a sharp exponential-type increase as the set of true high-degree vertices becomes clearly identifiable. As the observation time increases further, the accuracy approaches the maximum value of 1, though at different rates depending on $D$.

\begin{figure}[t]
  \centering
  \includegraphics[width=0.4\textwidth]{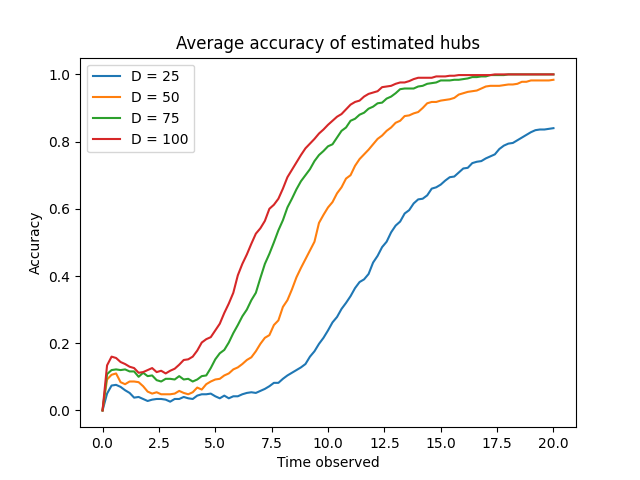}
  \caption{Average accuracy of estimating high-degree vertices for $K = 1$ and $D \in \{25, 50, 75, 100 \}$.}
  \label{fig:accuracy}
\end{figure}

We next examine the impact of parameter $K$ for $D = 25$ (Figure \ref{fig:DK_25}), averaging the accuracy over 50 trials for $K \in \{1,\dots,5\}$ and $T \le 20$. 
Interestingly, Figure \ref{fig:DK_25} reveals that the optimal choice of $K$ depends on the time horizon $T$: small $K$ yields higher accuracy for short horizons where re-infections are rare, while larger $K$ performs better in intermediate regimes. For long time horizons, the accuracy is less sensitive to $K$. We also compare our approach to a baseline that selects the ten vertices with the longest cumulative infection time; this strategy is known to identify high-degree vertices in the steady state regime $T \to \infty$ \cite{pastor2001epidemic}. While the baseline matches our algorithm's performance for large $T$, our methods obtain higher accuracy for small $T$. Notably, for $K \ge 3$, our algorithms meet or exceed the baseline accuracy across all time regimes.

\begin{figure}[t]
  \centering
  \begin{minipage}{0.5\textwidth}
    \centering
    \includegraphics[width=0.8\textwidth]{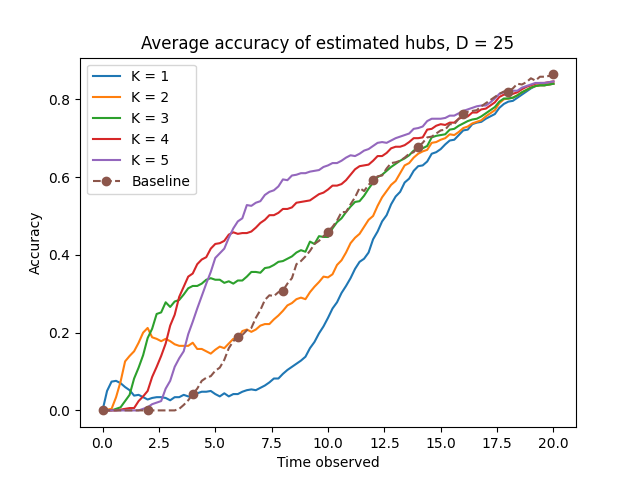}
    \caption{Average accuracy of estimating high-degree vertices for $K \in \{1,2,3,4,5 \}$ and $D = 25$}
    \label{fig:DK_25}
  \end{minipage}
  \label{fig:main}
\end{figure}

\subsection{Reduction in the number of infections}

\begin{figure}[t]
  \centering
  \includegraphics[width=0.4\textwidth]{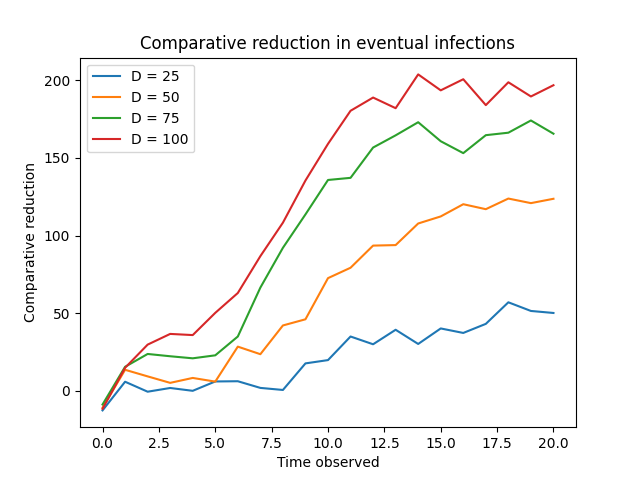}
  \caption{Comparative reduction in number of infections after interventions for various $D$, $K = 1$.}
  \label{fig:reduction}
\end{figure}

Finally, we illustrate how our algorithm for estimating high-degree vertices can inform interventions.
For observation windows $T$ between 0 and 20, we simulate the SIS process until time $T$. We then remove the estimated high-degree set (computed with $K = 1$) and simulate the SIS process for an additional 5.0 time units. Figure \ref{fig:reduction} shows the \emph{comparative reduction} in infections relative to removing a random set of 10 nodes (i.e., the number of infections after random removal minus the number of infections after removing nodes according to our algorithm), averaged over 40 trials. For short time horizons, little can be learned about high-degree vertices, and there is correspondingly little to no advantage of our methods over random selection. After $T \approx 2.5$, the benefits of targeted removals become more evident (at least for $D \ge 50$), with reductions growing as accuracy improves. Once the true high-degree set is reliably identifiable, the reductions plateau for long time horizons.

\section{Conclusion}
\label{sec:conclusion}

In this paper, we developed an algorithm for estimating high-degree vertices from the evolution of a SIS process on an unknown graph. Notably, our algorithm only require a constant-sized observation window to accurately estimate high-degree vertices, whereas the current state-of-the-art requires observing the process for time $T = \Omega(n)$ to learn the entire network \cite{elahi2025learn}. 
Major avenues of future work include identifying other topological features relevant for control such as community structure and degree distributions, as well as analyzing more realistic models for epidemics with re-infections.
\vspace{1cm} 
\bibliographystyle{IEEEtran}
\bibliography{IEEEabrv, references}

@InProceedings{mossel2024finding,
  title = 	 {Finding Super-spreaders in Network Cascades},
  author =       {Mossel, Elchanan and Sridhar, Anirudh},
  booktitle = 	 {Proceedings of Thirty Seventh Conference on Learning Theory},
  pages = 	 {3874--3914},
  year = 	 {2024},
  volume = 	 {247},
  series = 	 {Proceedings of Machine Learning Research},
  month = 	 {30 Jun--03 Jul},
  publisher =    {PMLR}
}

@article{WT04_epidemic,
    author = {Wallinga, Jacco and Teunis, Peter},
    title = {Different epidemic curves for severe acute respiratory syndrome reveal similar impacts of control measures},
    journal = {American Journal of Epidemiology},
    year = {2004},
    number={160},
    volume={6},
    pages ={509-516}
}

@inproceedings{AA05_blog,
author = {Adar, Eytan and Adamic, Lada A.},
title = {Tracking Information Epidemics in Blogspace},
year = {2005},
publisher = {IEEE Computer Society},
address = {USA},
booktitle={the Proceedings of the IEEE/WIC/ACM Conference on Web Intelligence},
pages = {207–214},
numpages = {8}
}

@article{gomez2012inferring,
author = {Gomez-Rodriguez, Manuel and Leskovec, Jure and Krause, Andreas},
title = {Inferring Networks of Diffusion and Influence},
year = {2012},
issue_date = {February 2012},
publisher = {Association for Computing Machinery},
address = {New York, NY, USA},
volume = {5},
number = {4},
issn = {1556-4681},
journal = {ACM Trans. Knowl. Discov. Data},
month = {feb},
articleno = {21},
numpages = {37}
}

@inproceedings{ACFKP13_trace_complexity,
author = {Abrahao, Bruno and Chierichetti, Flavio and Kleinberg, Robert and Panconesi, Alessandro},
title = {Trace complexity of network inference},
year = {2013},
publisher = {Association for Computing Machinery},
address = {New York, NY, USA},
booktitle = {the Proceedings of the 19th ACM SIGKDD International Conference on Knowledge Discovery and Data Mining},
pages = {491–499},
numpages = {9},
location = {Chicago, Illinois, USA}
}

@article{shen2014reconstructing,
    author = {Shen, Z. and Wang, W.X. and Fan, Y. and Di, Z. and Ying-Cheng, L.},
    title = {Reconstructing propagation networks with natural diversity and identifying hidden sources},
    journal = {Nature communications},
    year = {2014},
    volume = {5},
    number = {14}
}

@article{prasse2019infeasible,
    author = {Prasse, B. and Van Mieghem, P.},
    title = {Exact Network Reconstruction from Complete SIS Nodal State Infection Information Seems Infeasible},
    journal = {{IEEE} Transactions on Network Science and Engineering},
    year = {2019},
    volume = {6}, 
    number = {4},
    pages = {748 -- 759}
}

@article{brandenberger2025detecting,
    author = {Brandenberger, A. and Mossel, E. and Sridhar, A.},
    title = {Detecting Abrupt Changes in Point Processes: Fundamental Limits and Applications},
    year = {2025},
    note = {Preprint available at \url{https://arxiv.org/abs/2501.08392}}
}

@article{borgs2010antidote,
author = {Borgs, Christian and Chayes, Jennifer and Ganesh, Ayalvadi and Saberi, Amin},
title = {How to distribute antidote to control epidemics},
journal = {Random Structures \& Algorithms},
volume = {37},
number = {2},
pages = {204-222},
doi = {https://doi.org/10.1002/rsa.20315},
year = {2010}
}

@ARTICLE{nowzari2016analysis,
  author={Nowzari, Cameron and Preciado, Victor M. and Pappas, George J.},
  journal={IEEE Control Systems Magazine}, 
  title={Analysis and Control of Epidemics: A Survey of Spreading Processes on Complex Networks}, 
  year={2016},
  volume={36},
  number={1},
  pages={26-46},
  doi={10.1109/MCS.2015.2495000}}

@inproceedings{
elahi2025learn,
title={Learn to Vaccinate: Combining Structure Learning and Effective Vaccination for Epidemic and Outbreak Control},
author={Sepehr Elahi and Paula M{\"u}rmann and Patrick Thiran},
booktitle={Forty-second International Conference on Machine Learning},
year={2025}
}

@ARTICLE{zhang2024estimation,
  author={Zhang, Ciyuan and Leung, Humphrey and Butler, Brooks A. and Paré, Philip E.},
  journal={IEEE Transactions on Control of Network Systems}, 
  title={Estimation and Distributed Eradication of SIR Epidemics on Networks}, 
  year={2024},
  volume={11},
  number={2},
  pages={756-768},
  doi={10.1109/TCNS.2023.3306491}}

@ARTICLE{preciado2014optimal,
  author={Preciado, Victor M. and Zargham, Michael and Enyioha, Chinwendu and Jadbabaie, Ali and Pappas, George J.},
  journal={IEEE Transactions on Control of Network Systems}, 
  title={Optimal Resource Allocation for Network Protection Against Spreading Processes}, 
  year={2014},
  volume={1},
  number={1},
  pages={99-108},
  doi={10.1109/TCNS.2014.2310911}}

@article{dezso2002halting,
  title = {Halting viruses in scale-free networks},
  author = {Dezs\ifmmode \mbox{\H{o}}\else \H{o}\fi{}, Zolt\'an and Barab\'asi, Albert-L\'aszl\'o},
  journal = {Physical Review E},
  volume = {65},
  issue = {5},
  pages = {055103},
  numpages = {4},
  year = {2002},
  month = {May},
  publisher = {American Physical Society}
}

@article{pastor2002immunization,
  title = {Immunization of complex networks},
  author = {Pastor-Satorras, Romualdo and Vespignani, Alessandro},
  journal = {Phys. Rev. E},
  volume = {65},
  issue = {3},
  pages = {036104},
  numpages = {8},
  year = {2002},
  month = {Feb},
  publisher = {American Physical Society}
}

@book{contact_process,
title={Stochastic {I}nteracting {S}ystems: {C}ontact, {V}oter and {E}xclusion {P}rocesses},
author={Liggett, T.M.},
year={1999},
volume={324},
publisher={Springer, Berlin, Heidelberg}
}

@Book{brauer2012mathematical,
  title		= {{M}athematical {M}odels in {P}opulation {B}iology and
		  {E}pidemiology},
  author	= {Brauer, F. and Castillo-Chavez, C.},
  year		= {2012},
  publisher	= {Springer}
}

@article{newman2002spread,
  title = {Spread of epidemic disease on networks},
  author = {Newman, M. E. J.},
  journal = {Phys. Rev. E},
  volume = {66},
  issue = {1},
  pages = {016128},
  numpages = {11},
  year = {2002},
  month = {Jul},
  publisher = {American Physical Society},
  doi = {10.1103/PhysRevE.66.016128}
}

@article{salathe2010resolution,
author = {Marcel Salathé  and Maria Kazandjieva  and Jung Woo Lee  and Philip Levis  and Marcus W. Feldman  and James H. Jones },
title = {A high-resolution human contact network for infectious disease transmission},
journal = {Proceedings of the National Academy of Sciences},
volume = {107},
number = {51},
pages = {22020-22025},
year = {2010},
doi = {10.1073/pnas.1009094108},
eprint = {https://www.pnas.org/doi/pdf/10.1073/pnas.1009094108}}

@article{pastor2001epidemic,
  title = {Epidemic Spreading in Scale-Free Networks},
  author = {Pastor-Satorras, Romualdo and Vespignani, Alessandro},
  journal = {Phys. Rev. Lett.},
  volume = {86},
  issue = {14},
  pages = {3200--3203},
  numpages = {0},
  year = {2001},
  month = {Apr},
  publisher = {American Physical Society}
}

\end{document}